  \theoremstyle{plain}
  \newtheorem*{thm*}{\protect\theoremname}
 \theoremstyle{definition}
 \newtheorem*{defn*}{\protect\definitionname}
\theoremstyle{plain}
\newtheorem{thm}{\protect\theoremname}
  \theoremstyle{plain}
  \newtheorem{lem}[thm]{\protect\lemmaname}
  \theoremstyle{remark}
  \newtheorem*{rem*}{\protect\remarkname}
\gdef\SetFigFontNFSS#1#2#3#4#5{} 
\theoremstyle{remark}
\newtheorem*{qst*}{Question}
  \providecommand{\definitionname}{Definition}
  \providecommand{\lemmaname}{Lemma}
  \providecommand{\remarkname}{Remark}
  \providecommand{\theoremname}{Theorem}
\providecommand{\theoremname}{Theorem}
\begin{document}

\title{A graph counterexample to davies' conjecture}

\author{Gady Kozma}
\begin{abstract}
There exists a graph with two vertices $x$ and $y$ such that the
ratio of the heat kernels $p(x,x;t)/p(y,y;t)$ does not converge as
$t\to\infty$.
\end{abstract}
\maketitle
This paper is concerned with a conjecture of Brian Davies from 1997
on the heat kernel of Riemannian manifolds \cite[\S 5]{D97}. We will
not disprove the conjecture as stated, but rather transform it to
the realm of graphs using a well-known (though informal) ``dictionary''
between these two categories, and build a graph that will serve as
a counterexample. We will make some remarks on how the construction
might be carried over back to the category of manifolds in the end,
but we will not give all details. The bulk of this paper is about
graphs.

We start by describing the conjecture in its original setting. Let
$M$ be a connected Riemannian manifold, and let $p$ be the \emph{heat
kernel} associated with the \emph{Laplace-Beltrami operator} on $M$.
Then Davies conjecture states that for any $M$ and any 3 points $x,y,z\in M$
the limit
\begin{equation}
\lim_{t\to\infty}\frac{p(x,y;t)}{p(z,z;t)}\label{eq:davies}
\end{equation}
exists and is positive. Here $p(x,y;t)$ is the value of the heat
kernel at time $t$ and at points $x$ and $y$. This property is
known as the ``strong ratio limit property'' (where the ``weak''
version is an averaged result due to D\"oblin, \cite{D38}) or SRLP
for short. So Davies' conjecture is that in these settings SRLP always
holds. SRLP holds for manifolds with one end \cite{D82} and for strongly
Liouville manifolds (i.e.\ manifolds where any positive harmonic
function is constant), see \cite[corollary 2.7]{ABJ02} who also make
interesting connections between these properties and the infinite
Brownian loop.

Ratio limit properties were considered for Markov chains even earlier.
If $M$ is any Markov chain on a countable state space, then we say
that $M$ satisfies SRLP if (\ref{eq:davies}) holds for any three
states $x$, $y$ and $z$, where $p(x,y;t)$ is the probability that
the Markov chain started at $x$ will be at $y$ at time $t$. For
general Markov chains there are a few examples where SRLP does not
hold. Clearly it does not hold when the Markov chain has some kind
of periodicity. F. J. Dyson constructed an example of an aperiodic
recurrent Markov chain which does not satisfy SRLP \cite[part I, \S 10]{C60}.
That example utilizes long chains of states with only one outgoing
edge, which the walker must traverse sequentially. In particular it
is not \emph{reversible}.

Now, the Laplace-Beltrami operator is self-adjoint so a proper analog
of Davies conjecture needs to assume that the Markov chain is reversible.
Reversible Markov chains are also known as random walks on weighted
graphs. The issue of periodicity can be dealt with by looking at random
walk in continuous time or at lazy random walk. Lazy random walk is
a walk where the walker, at every step, chooses with probability $\frac{1}{2}$
to stay where it is, and with probability $\frac{1}{2}$ moves to
one of the neighbours (with probability proportional to the weights). 

The result is this paper is 
\begin{thm*}
\label{thm:graph}There exists a connected graph $G$ with bounded
weights and two vertices $x,y\in G$ such that the heat kernel of
the lazy random walk satisfies
\begin{equation}
\frac{p(x,x;t)}{p(y,y;t)}\nrightarrow\label{eq:nolim}
\end{equation}
as $t\to\infty$.
\end{thm*}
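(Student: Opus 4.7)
The plan is to construct $G$ whose large-scale geometry is not self-similar but instead oscillates between two distinct ``regimes'' at a sequence of geometrically growing scales, and to place $x$ and $y$ at positions that are one scale out of phase. Note that on any bounded-weight connected graph, a Harnack-type argument forces $p(x,x;t)/p(y,y;t)$ to remain bounded above and away from zero, so the obstruction to SRLP must take the form of oscillation between two or more subsequential limits, not divergence.

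I would build $G$ by attaching, along a sparse backbone, a sequence of ``pods'' $D_k$ at locations $d_k$ with $d_{k+1}/d_k \to \infty$. The pods alternate between two types: type $A$, a long finite spur of fixed width from which the walk returns with a specific sojourn-time distribution; and type $B$, a dense clique-like region that traps the walk with a different characteristic profile. The sizes of the pods are tuned so that the walk from a base vertex hits $D_k$ at time $\asymp d_k^2$ and the influence of $D_k$ on the return probability remains dominant throughout the window $[d_k^2, d_{k+1}^2]$. Placing $x$ and $y$ so that they feel the oscillation with opposite phases --- for instance, $x$ adjacent to a type-$A$ pod and $y$ to a type-$B$ pod, with a graph automorphism shifting the scales by one swapping their local environments --- should produce return probabilities whose quotient alternates between two distinct limiting values as $k$ increases.

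The key technical steps are: (1) define $d_k$ and the pods precisely; (2) establish matching upper and lower bounds on $p(x,x;t)$ and $p(y,y;t)$ in each scale window $[d_k^2,d_{k+1}^2]$, with upper bounds via Nash-type inequalities and volume growth and lower bounds via spectral decomposition of a finite truncation of $G$; (3) compare the walk to an auxiliary birth-and-death chain on the scale index $k$ to control the transitions between windows; and (4) conclude that $\limsup_t p(x,x;t)/p(y,y;t) > \liminf_t p(x,x;t)/p(y,y;t)$.

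The main obstacle I anticipate is step (2). At time $t$ inside the window $[d_k^2,d_{k+1}^2]$, the walk has a long past history spanning all scales $\leq k$, and the return probability at $x$ or $y$ integrates contributions from every previously visited pod; one must show that the contribution of the single ``currently active'' scale dominates the sum. This seems to require very rapid growth of the $d_k$ and fine estimation of the spectral measures of the graph Laplacian at $x$ and at $y$. The bounded-weight hypothesis is a secondary constraint that forbids distinguishing scales via edge weights, so the entire oscillatory structure must be encoded combinatorially in the shapes and sizes of the pods.
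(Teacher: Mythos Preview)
Your plan has the right spirit but carries a real gap. First, the ``graph automorphism shifting the scales by one'' is self-defeating: any actual automorphism $\sigma$ of $G$ with $\sigma(x)=y$ forces $p(x,x;t)=p(y,y;t)$ for every $t$, so the ratio is identically $1$; if you mean only an approximate or asymptotic symmetry, it buys you nothing and you must compute both sides from scratch anyway. More seriously, on a single backbone with pods at positions $d_k$, once $t$ is large the walk from $x$ and the walk from $y$ have diffused over the same stretch of backbone and visited the same pods; the local distinction ``$x$ adjacent to type $A$, $y$ to type $B$'' washes out, and both return probabilities are governed by the same global structure. You flag step~(2) as the obstacle, but the issue is not merely technical: as described, the construction supplies no \emph{mechanism} that keeps $x$ and $y$ distinguishable at arbitrarily large times in an alternating way.

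The paper supplies exactly such a mechanism by putting $x$ and $y$ on opposite sides of a bottleneck. It builds two halves $H^{e},H^{o}\subset\mathbb{Z}^{22}$, each carrying its own multiscale structure, and joins them by a single bridge edge $(x,y)$ of small weight. The scales $a_k$ are chosen inductively so that at time $t_{2k}$ the ball of radius $t_{2k}$ in $H^{e}$ is roughly isometric to $\mathbb{Z}^{3}$ while the corresponding ball in $H^{o}$ is roughly isometric to $\mathbb{Z}^{22}$; at $t_{2k+1}$ the roles reverse. Then $p(x,x;t_{2k})\gtrsim t_{2k}^{-3/2}$ directly (Lemma on first returns in a $\mathbb{Z}^3$-like graph), whereas a walk from $y$ that stays in $H^{o}$ contributes only $O(t_{2k}^{-11})$ (Delmotte's theorem); to do better it must cross the bridge to $x$, spend most of its time there, and cross back, which costs a fixed multiplicative factor $\le\tfrac14$ arranged via the bridge weight and the escape probabilities of the transient halves. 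This gives $p(y,y;t_{2k})\le(\tfrac14+o(1))\,p(x,x;t_{2k})$, and the odd scales give the reverse inequality. The heat-kernel input is rough isometry to $\mathbb{Z}^{d}$ plus Delmotte; no spectral decomposition or birth--death comparison is needed. The bridge is the idea your proposal is missing: every crossing is penalised by a constant, and which side is favourable alternates with the scale.
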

Let us remark on the ``bounded weights'' clause. When doing analogies
between manifolds and graphs, it is often assumed that the manifold
has bounded geometry and the graph has bounded weights. Davies, however,
explicitly does not make the assumption of bounded geometry. Thus
one might wonder what is exactly the graph analog. All this is mute,
of course, since the counterexample does has bounded weights (and
hence a manifold example constructed along the same line should have
bounded geometry).

It is easy to see that in these setting (reversible, irreducible,
lazy) this ratio must be bounded between two constants independent
of $n$. Hence if it does not converge then it must fluctuate between
two values. The proof constructs the graph with two halves, denoted
$H^{e}$ and $H^{o}$ ($e$ and $o$ standing for even and odd, $H$
standing for half), which are connected by one edge, $(x,y)$. On
the ``odd scales'', $H^{e}$ will ``look like $\mathbb{Z}^{22}$''
while $H^{o}$ will ``look like $\mathbb{Z}^{3}$''. This means
that to get from $x$ to $x$ (where $x$ is on the $H^{e}$ side),
the most beneficial strategy is to move to $y$ as fast as possible,
spend most of your time on the $H^{o}$ side and return to the $H^{e}$
side only in the last minute. Clearly this would mean that $p(x,x;t)$
is smaller than $p(y,y;t)$ as the random walk starting from $y$
can stay on its side at all time, not losing the constant that $x$
needs for the maneuver. On the ``even scales'' the picture is reversed
and $y$ is at a disadvantage. See figure \ref{fig:ReRo} --- drawing
in 22 dimensions might have distracted the reader, so the figure demonstrates
the construction in 1 and 2 dimensions. The smallest two braces in
the figure are the first scale, in which $H^{o}$ is really one dimensional
and $H^{e}$ is really two-dimensional. The larger two braces indicate
the second scale. This time $H^{o}$ is a network of lines so it should
be thought about as two dimensional, while $H^{e}$ is a thick column,
so it should be thought about as one dimensional. The third scale
is only hinted at in this figure, but one can imagine that $H^{o}$
now becomes a thick band, so it is again one-dimensional, while $H^{e}$
becomes a network of these thick columns and bands, so it is back
to being two dimensional.
\begin{figure}
\input{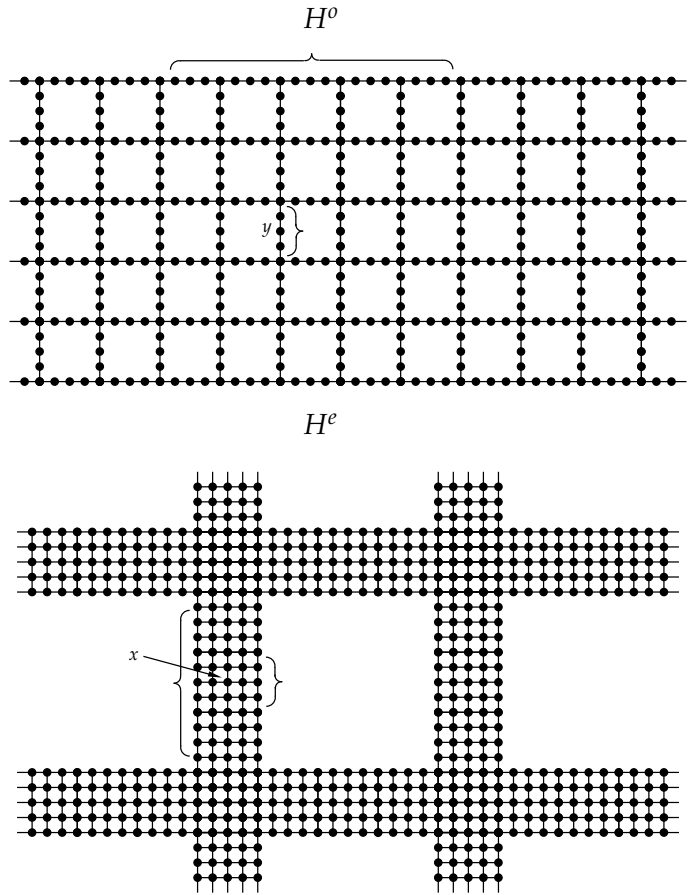}

\caption{\label{fig:ReRo}The graphs $H^{o}$ and $H^{e}$}
\end{figure}

As one might expect, the numbers $3$ and $22$ have no particular
significance. They both have to be $>2$, since otherwise our graph
would be recurrent and recurrent graphs always satisfy SRLP \cite[theorem 3]{O61}.
And of course they have to be different. We took here the large value
$22$ in order to be able to be wasteful in various points (sum over
times and such stuff), but the proof could proceed with any value
larger than $3$.

This paper was first written in 2006. I wish to take this opportunity
to apologize to all those who has to wait so long for it to appear,
with no real reason. My intentions were good but my time management
was abysmal. I wish to thank Yehuda Pinchover for telling me about
the problem and for reading early drafts. Partially supported by the
Israel Science Foundation.

\section{Proof}

The construction uses $\mathbb{Z}^{d}$-like graphs as building blocks,
so we start with quoting a few results on these. We first recall the
notion of rough isometry \cite{K85}.
\begin{defn*}
Let $X$ and $Y$ be two metric spaces. We say that $X$ and $Y$
are roughly isometric if there is a constant $C$ and a map $\varphi:X\to Y$
with the following properties:
\begin{enumerate}
\item For all $x$ and $y$ in $X$, 
\[
\frac{1}{C}d(x,y)-C\le d(\varphi(x),\varphi(y))\le Cd(x,y)+C
\]

\item The image of $\varphi$ is roughly dense, i.e.\ for all $y\in Y$
there is an $x\in X$ such that $d(y,\varphi(x))\le C$
\end{enumerate}
If $G$ and $H$ are graphs we say that they are roughly isometric
if they are roughly isometric when considered with the metric $d$
being the graph distance, namely $d(x,y)$ is the length of the shortest
path between $x$ and $y$, or $\infty$ if no such path exists.
\end{defn*}
With this definition we can state the following standard result, essentially
due to Delmotte.
\begin{lem}
\label{thm:delm}Let $G$ be a graph roughly isometric to $\mathbb{Z}^{d}$.
Then the heat kernel $p$ for the lazy walk on $G$ satisfies, for
all $t\geq1$, 
\[
ct^{-d/2}\leq p(x,x;t)\leq Ct^{-d/2}.
\]

\end{lem}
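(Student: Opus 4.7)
The plan is to deduce this from Delmotte's characterization of two-sided Gaussian heat-kernel bounds on graphs: for a graph of bounded geometry, the estimate
\[
\frac{c}{V(x,\sqrt t)}\exp\!\left(-\frac{d(x,y)^2}{ct}\right)\le p(x,y;t)\le \frac{C}{V(x,\sqrt t)}\exp\!\left(-\frac{d(x,y)^2}{Ct}\right)
\]
for $t\ge 1$ is equivalent to the conjunction of volume doubling and a scale-invariant Poincar\'e inequality (here $V(x,r)$ is the volume of the ball of radius $r$ about $x$). Setting $x=y$ kills the exponential, and combining with the volume estimate $V(x,\sqrt t)\asymp t^{d/2}$ yields exactly $ct^{-d/2}\le p(x,x;t)\le Ct^{-d/2}$, which is what the lemma asserts.

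So the real task reduces to verifying, for a graph $G$ roughly isometric to $\mathbb{Z}^d$, that (a) $V_G(x,r)\asymp r^d$ for $r\ge 1$ (which gives doubling automatically), and (b) $G$ satisfies the scale-invariant Poincar\'e inequality on balls. For $\mathbb{Z}^d$ itself both properties are classical and easily checked. The passage to $G$ is then the statement that both properties are preserved under rough isometries between graphs of bounded geometry. For the volume estimate this is a direct ball-counting argument: if $\varphi\colon\mathbb{Z}^d\to G$ is the rough isometry, then $B_G(x,r)$ is covered by the $\varphi$-image of $B_{\mathbb{Z}^d}(z, Cr+C)$ for an appropriate $z$ with bounded multiplicity, and similarly in the opposite direction using a rough inverse of $\varphi$; doubling follows. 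For the Poincar\'e inequality one appeals to the well-known rough-isometry invariance result of Coulhon and Saloff-Coste.

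The main obstacle in a full write-up is the bookkeeping of constants: one has to track how the rough-isometry constant $C$ and the dimension $d$ combine through the three successive transfers (volume, doubling, Poincar\'e) and how Delmotte's equivalence then produces the constants $c,C$ in the statement. Since the lemma is described in the paper itself as a ``standard result, essentially due to Delmotte'', I would not reprove any of these ingredients but rather quote Delmotte's theorem together with the Coulhon--Saloff-Coste invariance, reducing the proof to a short verification that the hypotheses of Delmotte's theorem are met for any $G$ roughly isometric to $\mathbb{Z}^d$.
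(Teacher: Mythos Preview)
Your proposal is correct and follows essentially the same route as the paper: invoke Delmotte's theorem to reduce the on-diagonal bound to verifying volume doubling and the scale-invariant Poincar\'e inequality, then obtain both from the corresponding properties of $\mathbb{Z}^d$ via rough-isometry invariance, together with the ball-volume estimate $|B(x,r)|\asymp r^d$. The only cosmetic difference is that the paper quotes Delmotte's result directly in the on-diagonal form $p(x,x;t)\approx 1/|B(x,\sqrt t)|$ rather than the full Gaussian two-sided bound, and handles the rough-isometry stability of Poincar\'e by a one-line remark rather than citing Coulhon--Saloff-Coste.
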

Here $G$ is a simple graph --- we do not allow weights or multiple
edges. $C$ and $c$ are constant which do not depend on $t$. In
general we will use $c$ for constants which are small enough and
$C$ for constants which are large enough, and different appearance
of $c$ and $C$ might relate to different constants.
\begin{proof}
By Delmotte's theorem \cite{D99} any $G$ which satisfies volume-doubling
and the Poincar\'e inequality, satisfies 
\[
p(x,x;t)\approx\frac{1}{|B(x,\sqrt{t})|}
\]
where $B(x,r)$ is the ball around $x$ with radius $r$ (again with
the graph distance), and $|B(x,r)|$ is the sum of the degrees of
the vertices in $B$. The notation $X\approx Y$ is short for $cY\le X\le CY$.
The fact that $G$ is roughly-isometric to $\mathbb{Z}^{d}$ gives
\begin{equation}
|B(x,r)|\approx r^{d}\label{eq:Brd}
\end{equation}
so we would get $p(x,x;t)\approx t^{-d/2}$, as needed. So we need
only show that $G$ satisfies volume doubling and Poincar\'e inequality.

Now, the definition of volume doubling is that for every $x$ a vertex
of $G$ and every $r\ge1$
\[
|B(x,2r)|\le C|B(x,r)|
\]
and it follows immediately from (\ref{eq:Brd}). The Poincar\'e inequality
is not much more complicated. The definition is: for every vertex
$x$, for every $r$ and for every function $f:B(x,2r)\to\mathbb{R}$,
\begin{equation}
\sum_{y\in B(x,r)}\deg(y)|f(y)-\overline{f}|^{2}\le Cr^{2}\sum_{(y,z)\in E(B(x,2r))}(f(y)-f(z))^{2}\label{eq:Poincare}
\end{equation}
where 
\[
\overline{f}=\frac{1}{|B(x,r)|}\sum_{y\in B(x,r)}\deg(y)f(y).
\]
and where $\deg(y)$ is the degree of $y$, and $E(B)$ is the set
of edges both whose vertices are in $B$. Now, $\mathbb{Z}^{d}$ satisfies
the Poincar\'e inequality (see e.g.\ \cite[\S 4.1.1]{PSC99}). It
is well-known and not difficult to see that the Poincar\'e inequality
is preserved by rough isometries (it uses the fact that $\sum\deg(y)|f(y)-a|^{2}$
is minimized when $a=\overline{f}$). This finishes the proof.\end{proof}
\begin{lem}
\label{lem:norturnp}Let $G$ be a graph roughly-isometric to $\mathbb{Z}^{d}$,
$d\ge3$ and let $x$ be some vertex. Let $p$ be the probability
that lazy random walk starting from $x$ returns to $x$ for the first
time at time $t$. Then $p\ge ct^{-d/2}$.\end{lem}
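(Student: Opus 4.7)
The plan is to derive the lower bound from the upper bound of Lemma \ref{thm:delm} via the classical renewal equation. Writing $\sigma = \min\{s\ge 1: X_s = x\}$ for the first return time and $f(s)=P_x(\sigma=s)$, conditioning on $\sigma$ gives
\[
p(x,x;t)=\sum_{s=1}^{t}f(s)\,p(x,x;t-s),\qquad t\ge 1,
\]
and hence $f(t) = p(x,x;t) - \sum_{s=1}^{t-1} f(s)\,p(x,x;t-s)$. Two free ingredients are at hand: the trivial upper bound $f(s)\le p(x,x;s)\le Cs^{-d/2}$, because first return at time $s$ forces $X_s = x$; and, since $d\ge 3$, the convergence $\Gamma := \sum_t p(x,x;t) \le C\sum_t t^{-d/2} < \infty$, yielding transience and $q := \sum_{s\ge 1} f(s) = 1-\Gamma^{-1}<1$.

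To control the subtractive convolution I would split at $s = \alpha t$ for a small tuning parameter $\alpha$. On the range $s\le \alpha t$, the factor $p(x,x;t-s)$ is at most a constant times $t^{-d/2}$ and $\sum_{s\le\alpha t} f(s)\le q<1$; on the range $\alpha t<s<t$, the factor $f(s)$ is at most $C(\alpha t)^{-d/2}$ and $\sum_{k=1}^{(1-\alpha)t}p(x,x;k)\le \Gamma-1<\infty$ by transience. Both pieces contribute a multiple of $t^{-d/2}$, so the whole subtractive term is bounded by $A(\alpha)\,t^{-d/2}$ for an explicit constant depending on $\alpha$, $q$, $\Gamma$ and the constants from Lemma \ref{thm:delm}; as soon as $A(\alpha)<c$, where $c$ is the lower-bound constant of $p(x,x;t)\ge ct^{-d/2}$, one concludes $f(t)\ge (c-A(\alpha))\,t^{-d/2}$.

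The main obstacle is precisely this constant bookkeeping, which a single split need not achieve for arbitrary roughly-isometric graphs. Two standard ways to close the gap come to mind. The first is to iterate the renewal estimate, feeding the improved bound on $f$ obtained from one round back into the convolution of the next to sharpen the constant. The second is to reduce the question to the walk killed on first return to $x$, via a first-step decomposition
\[
f(t) = \sum_{y,z\sim x}\frac{\tilde p^{\,t-2}(y,z)}{4\deg(x)\deg(z)}, \qquad t\ge 2,
\]
and then apply Delmotte-type two-sided heat kernel estimates to the killed walk $\tilde p$, using that removing a single vertex from $G$ preserves volume doubling and the Poincar\'e inequality when $d\ge 3$ (so Lemma \ref{thm:delm} still applies to the killed graph, giving $\tilde p^{\,t}(y,z)\asymp t^{-d/2}$ for $y,z$ at bounded distance from $x$). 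I expect the analytic content of either route to be routine once the constants are tracked carefully; the only delicate point is verifying that transience really does beat the convolution loss, which is where the hypothesis $d\ge 3$ enters decisively.
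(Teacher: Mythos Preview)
Your renewal-equation approach is natural, but as you yourself note, the single split cannot close: the small-$s$ piece alone already contributes at least $qC(1-\alpha)^{-d/2}t^{-d/2}$, and since $q$, $c$, $C$ are fixed by $G$ there is no reason for $qC<c$. Neither of your proposed fixes is routine. The iteration idea is a red herring: you seek a \emph{lower} bound on $f(t)$, so feeding an improved lower bound back into the subtracted convolution does nothing to shrink it, while the upper bound $f(s)\le p(x,x;s)$ is already sharp up to constants and cannot be iterated down. The killed-walk route has a real obstacle you wave past: removing the single vertex $x$ can disconnect $G$ (attach a pendant vertex at $x$ to any graph roughly isometric to $\mathbb{Z}^d$ and it is still roughly isometric to $\mathbb{Z}^d$), so ``Lemma~\ref{thm:delm} still applies to the killed graph'' is false as stated; one would need to argue that at least one pair of neighbours of $x$ survives in a component that remains $\mathbb{Z}^d$-like, which is true but requires a geometric argument, not a one-line remark.

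The paper sidesteps the renewal equation entirely. It bounds the probability $p_2$ that the walk is at $x$ at time $t$ \emph{and} at some intermediate time $s\in[K,t-K]$ by
\[
p_2\le\sum_{s=K}^{t-K}p(x,x;s)\,p(x,x;t-s)\le CK^{1-d/2}t^{-d/2},
\]
and since $d\ge3$ one may fix $K$ large enough that $p_2\le\frac12 p(x,x;t)$. This leaves only possible visits to $x$ in $[1,K{-}1]$ or $[t{-}K{+}1,t{-}1]$, which are removed by an explicit local surgery exploiting laziness: if the original walker is at $x$ at some $s<K$ and next steps to a neighbour $y$, replace the first $s{+}1$ steps by ``step to $y$, then stay put $s$ times''. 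This costs only a constant depending on $K$ (not on $t$) and converts a return-at-$t$ path into a first-return-at-$t$ path. The use of laziness for this finite local rewiring is the idea missing from your sketch, and it dispenses with the constant-matching problem altogether.
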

\begin{proof}
Let $p_{1}$ be the same probability but without the restriction that
this is the first return to $x$. This is exactly $p(x,x;t)$ and
by theorem \ref{thm:delm} we have $p_{1}\ge ct^{-d/2}$. Fix some
$K$ and examine the event that the random walk returns to $x$ at
$t$ and also at some time $s\in[K,t-K]$. Let $p_{2}$ be its probability.
Using the other direction in theorem \ref{thm:delm} we can write
\[
p_{2}\le\sum_{s=K}^{t-K}p(x,x;s)p(x,x;t-s)\le C\sum_{s=K}^{t-K}s^{-d/2}(t-s)^{-d/2}\le CK^{1-d/2}t^{-d/2}.
\]
Since $d\ge3$ we can choose $K$ sufficiently large such that $p_{2}\le\frac{1}{2}p_{1}$.
So we know that with probability $p_{1}-p_{2}\ge ct^{-d/2}$ the walk
does not return to $x$ between $K$ and $t-K$. If it does reach
$x$ before time $K$, do some local modification so that it does
not. For example, if the original walker reached $x$ at some time
$s<K$ and on the next step went to some neighbour $y$ of $x$, modify
it to walk to $y$ in the first step and stay there for $s$ steps
(remember that our walk is lazy) and then continue like the original
walker. Clearly this ``costs'' only a constant and ensures our walker
does not visit $x$ in the interval $[1,K]$. Do the same for the
interval $[t-K,t-1]$, losing another constant. The finishes the lemma.\end{proof}
\begin{lem}
\label{lem:lazy}Let $G$ be a graph and let $p$ be the heat kernel
for the lazy walk on $G$. Let $t$ and $s$ satisfy that $|t-s|\leq\sqrt{t}$.
Then 
\[
|p(x,x;t)-p(x,x;s)|\leq C\frac{|t-s|\log^{3}t}{\sqrt{t}}p(x,x;t)+Ce^{-c\log^{2}t}.
\]
\end{lem}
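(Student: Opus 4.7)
My plan is to use the spectral representation of the lazy heat kernel together with H\"older's inequality. The lazy walk is reversible, so $P^{\mathrm{lazy}}$ is a bounded self-adjoint operator on $\ell^2(\pi)$ with spectrum contained in $[0,1]$. The spectral theorem therefore supplies a positive finite measure $\mu_x$ on $[0,1]$, of total mass $p(x,x;0) = 1/\pi(x)$, with
\[
p(x,x;\tau) = \int_0^1 \lambda^\tau\, d\mu_x(\lambda)
\]
for every integer $\tau\ge 0$. Assume without loss of generality that $s<t$, put $m := t-s$, and write $p_\tau := p(x,x;\tau)$. H\"older's inequality applied to the factorization $\lambda^s = (\lambda^t)^{s/t}\cdot 1^{m/t}$ with conjugate exponents $t/s$ and $t/m$ yields $p_s \le p_t^{s/t}\, p_0^{m/t}$, or equivalently
\[
\log\frac{p_s}{p_t} \le \frac{m}{t}\log\frac{p_0}{p_t}.
\]

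I would then split into two cases according to the size of $p_t$. If $p_t \ge e^{-\log^2 t}$, then since $p_0 = 1/\pi(x)$ is a bounded graph constant the right side above is at most $Cm\log^2 t/t$, which is $o(1)$ because $m \le \sqrt{t}$. Exponentiating and using $e^u-1 \le 2u$ for small $u$,
\[
|p_s - p_t| = p_s - p_t \le \frac{Cm\log^2 t}{t}\, p_t \le \frac{Cm\log^3 t}{\sqrt{t}}\, p_t,
\]
the last step being the trivial bound $1/t \le \log t/\sqrt{t}$ for $t$ large.

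If instead $p_t < e^{-\log^2 t}$, I would invoke log-convexity of $\tau\mapsto p(x,x;\tau)$, which is Cauchy-Schwarz on the spectral integral: $p_s^2 \le p_t\, p_{2s-t}$. The hypothesis $|t-s|\le \sqrt{t}$ forces $s\ge t/2$, so $2s-t \ge 0$ and $p_{2s-t} \le p_0 \le C$. Hence $p_s \le C\sqrt{p_t} \le C e^{-\log^2 t/2}$, and $|p_s - p_t| \le p_s \le Ce^{-c\log^2 t}$. The two cases together give the claim; the proof is essentially just H\"older and Cauchy-Schwarz, and in fact yields the slightly stronger bound with $\log^2 t$ in place of $\log^3 t$. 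The only technical ingredient is the invocation of the spectral theorem on $\ell^2(\pi)$, which is standard given that $P^{\mathrm{lazy}}$ is bounded and self-adjoint; no obstacle beyond that is anticipated.
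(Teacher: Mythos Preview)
Your proof is correct and takes a genuinely different route from the paper's argument. The paper represents the lazy walk as the simple walk evaluated at a binomial random time, $p(x,x;t)=\sum_i q(x,x;i)\binom{t}{i}2^{-t}$, and then compares the binomial weights at $t$ and $s$ directly via Stirling's formula, splitting the sum into a central window $|t-2i|\le\sqrt{t}\log t$ and an exponentially small tail. Your approach bypasses this entirely: you use only that $P^{\mathrm{lazy}}$ has spectrum in $[0,1]$, so $\tau\mapsto p(x,x;\tau)$ is a moment sequence of a positive measure, and then H\"older (log-convexity) does all the work. This is both shorter and more general --- it applies to any self-adjoint Markov operator with nonnegative spectrum, not just the $\tfrac12(I+P)$ lazification --- and it actually yields the much stronger first term $C\frac{|t-s|\log^2 t}{t}\,p(x,x;t)$, not merely $\log^2$ in place of $\log^3$ as you say. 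The paper's approach, by contrast, is more explicit and avoids invoking the spectral theorem.

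Two minor remarks. First, in the paper's convention $p(x,y;t)$ is the transition probability, so $p(x,x;0)=1$, not $1/\pi(x)$; this is harmless since either way $p_0$ is bounded by a constant and that is all you use. Second, your ``without loss of generality $s<t$'' is fine for the application (only $s\le t$ is used later), and in any case the other direction follows by swapping the roles of $s$ and $t$ and noting $s\le t+\sqrt{t}\le 2t$, $p_s\le p_t$.
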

\begin{proof}
Denote by $q(x,y;t)$ the heat kernel for the \emph{simple} random
walk on $G$. Then by definition
\[
p(x,x;t)=\sum_{i=0}^{t}q(x,x;i)\binom{t}{i}2^{-t}.
\]
Writing the same formula for $p(x,x;s)$ and subtracting we get
\[
|p(x,x;t)-p(x,x;s)|\leq\sum_{i}q(x,x;i)\left(\binom{t}{i}2^{-t}-\binom{s}{i}2^{-s}\right)=\Sigma_{1}+\Sigma_{2}
\]
where $\Sigma_{1}$ is the sum over all $|t-2i|\leq\sqrt{t}\log t$
and $\Sigma_{2}$ is the rest. A simple calculation with Stirling's
formula shows that 
\[
2^{-t}\binom{t}{i}=\sqrt{\frac{2}{\pi t}}\exp\left(-\frac{(t-2i)^{2}}{2t}\left(1+O\left(\frac{|t-2i|+1}{t}\right)\right)\right)
\]
And with some more calculations 
\[
\Sigma_{1}\leq C\sum_{|t-2i|\leq\sqrt{t}\log t}q(x,x;i)\sqrt{\frac{2}{\pi t}}e^{-(t-2i)^{2}/2t}\frac{|t-s|\log^{3}t}{\sqrt{t}}\leq C\frac{|t-s|\log^{3}t}{\sqrt{t}}p(x,x;t)
\]
while
\[
\Sigma_{2}\leq C\sum_{|t-2i|>\sqrt{t}\log t}e^{-c(t-2i)^{2}/t}\leq Ce^{-c\log^{2}t}
\]
proving the lemma.
\end{proof}

\begin{proof}
[Proof of the theorem]Abusing notations, for subsets $H\subset\mathbb{Z}^{d}$
we will not distinguish between $H$ as a set and as an induced subgraph
of $\mathbb{Z}^{d}$ ($d$ will be $22$). For the construction we
need a sufficiently fast increasing sequence $a_{1}<a_{2}<\dotsb$.
We further assume that $a_{k}$ are all even and that $a_{k-1}$ divides
$\frac{1}{2}a_{k}$. It would have probably been enough to choose
$a_{k}=2^{a_{k-1}}$, but it turns out simpler to choose the $a_{k}$
inductively, and we perform this as follows. Let $a_{1}=2$. Assume
now $a_{1},\dotsc,a_{k-1}$ have been defined. Define, for integers
$m<\frac{1}{2}l$ and $i\in\{1,\dotsc,22\}$, 
\begin{align*}
Q_{l,m,i} & :=\left\{ \vec{n}\in\mathbb{Z}^{22}:\left|n_{i}\textrm{ mod }l\right|\leq m\right\} \\
Q_{l,m} & :=\bigcup_{\substack{I\subset\{1,\dotsc,22\}\\
|I|=19
}
}\bigcap_{i\in I}Q_{l,m,i}.
\end{align*}
Here $n\textrm{ mod }l\in\{-\lfloor\frac{l-1}{2}\rfloor,\dotsc,\lfloor\frac{l}{2}\rfloor\}$.
In words, $Q_{l,m,i}$ is a $21$-dimensional subspace of $\mathbb{Z}^{22}$
orthogonal to one of the axes, fattened up by $2m+1$ (a ``slab'')
and repeated periodically with period $l$. $Q_{l,m}$ is the collection
of all $3$-dimensional subspaces, fattened and repeated similarly.
The particular point $\vec{0}$ is in fact contained in all $\binom{22}{3}$
of these $3$-dimensional slabs which will be a little inconvenient,
so let us shift $Q_{l,m}$ by 
\[
v(m)=\Big(\underbrace{{\textstyle \frac{1}{2}}m,\dotsc,{\textstyle \frac{1}{2}}m}_{3\textrm{ times}},\underbrace{\vphantom{{\textstyle \frac{1}{2}}m}0,\dotsc,0}_{19\textrm{ times}}\Big).
\]
In the shifted set $Q_{l,m}+v(m)$ the geometry of the neighbourhood
of $\vec{0}$ is simpler, it is contained in just one slab. Compare
to the figure on page \pageref{fig:ReRo}. The point $x$ is in the
\emph{middle} of a fat column and not at the intersection of a column
and a band.

We want to use these graphs with $l=a_{j}$ and $m$ a little larger
than $a_{j-1}$. Precisely, define
\[
b_{j}=\sum_{k=1}^{j}a_{k}.
\]
With this choice of $b_{j}$, $Q(a_{j},b_{j-1},i)$ contains only
complete components of $Q(a_{l},\linebreak[4]b_{l-1},i)$ for each
$l<j$. Each such component is either contained in $Q(a_{j},b_{j-1},i)$
or disjoint from it. The same holds for the translations $Q(a_{l},b_{l-1},i)+v(a_{l})$
(we need here that $a_{l}>4a_{l-1}$ so let us assume this from now
on). For brevity, define $v_{j}=v(a_{j})$.

We may now define two graphs, denoted by $H_{k-1}^{\textrm{e}}$
and $H_{k-1}^{o}$ (``e'' and ``o'' standing for even and odd)
by
\[
H_{k-1}^{\textrm{e/o}}:=\bigcap_{\substack{2\leq j\leq k-1\\
j\textrm{ even/odd}
}
}(Q_{a_{j},b_{j-1}}+v_{j}).
\]
We shall usually suppress the $k-1$ from the notation. It is not
difficult to check that $H^{\textrm{e/o}}$ are both roughly isometric
to $\mathbb{Z}^{22}$ (the rough isometry constant depends on the
``past'' $a_{1},\dotsc,a_{k-1})$. Therefore by lemma \ref{thm:delm}
we see that there exists an $\alpha$ (again, depending on the past)
such that
\begin{equation}
p_{H^{\textrm{e/o}}}(x,x;t)\leq\alpha t^{-11}.\label{eq:alpha}
\end{equation}
Examine now the graphs 
\[
F_{k-1}^{\textrm{e/o}}:=H_{k-1}^{\textrm{e/o}}\cap\left\{ \vec{n}\in\mathbb{Z}^{22}:\left|n_{i}\right|\leq b_{k-1}\:\forall i=4,\dotsc,22\right\} .
\]
$F^{\textrm{e/o}}$ are both roughly isometric to $\mathbb{Z}^{3}$
so by lemma \ref{lem:norturnp} there exists some $\beta$ such that
\begin{equation}
\mathbb{P}_{F^{\textrm{e/o}}}(\mbox{the walk returns to }\vec{0}\mbox{ for the first time at }t)\geq\frac{1}{\beta}t^{-3/2}.\label{eq:beta}
\end{equation}
 Define $\gamma_{k}:=\left\lceil \max\{\alpha,\beta\}\right\rceil $
(as usual, $\left\lceil \cdot\right\rceil $ stands for the upper
integer value). With these we can define $a_{k}$ to be any even number
satisfying $a_{k}>2\gamma_{k}^{4}+4a_{k-1}$ and such that $a_{k-1}$
divides $\frac{1}{2}a_{k}$. This completes the description of the
induction, and we define 
\[
H_{\infty}^{\textrm{e/o}}:=\bigcap_{\substack{2\leq j\\
j\textrm{ even/odd}
}
}(Q_{a_{j},a_{j-1}}+v_{j}).
\]
These graphs will be the two halves of our target graph $G$.

Before continuing, let us collect some simple facts about $H_{\infty}^{\textrm{e/o}}$:
\begin{enumerate}
\item $H_{\infty}^{\textrm{e/o}}$ is connected --- in fact we used this
indirectly when we claimed $H_{k}^{\textrm{e/o}}$ are roughly isometric
to $\mathbb{Z}^{22}$.
\item $H_{\infty}^{\textrm{e/o}}$ are transient --- this follows because
each contains a copy of $\mathbb{Z}^{3}$ (namely $\{n_{4}=\dotsb=n_{22}=0\}$)
and transience is preserved on adding edges. This last fact follows
from conductance arguments, see e.g.\ \cite{DS84}.
\end{enumerate}
Define therefore the escape probabilities
\[
\varepsilon^{\textrm{e/o}}:=\mathbb{P}_{H_{\infty}^{\textrm{e/o}}}^{\vec{0}}(R(t)\neq\vec{0}\,\forall t>0)
\]
($R$ being the random walk on the graph) and let $\delta:=\frac{1}{2}\min\{\varepsilon^{\textrm{e}},\varepsilon^{\textrm{o}}\}$.
Define the graph $G$ by connecting $H_{\infty}^{\textrm{e}}$ to
$H_{\infty}^{\textrm{o}}$ with a single edge between the two $\vec{0}$
with weight $\delta$. Define $x:=\vec{0}^{\textrm{e}}$ and $y=\vec{0}^{\textrm{o}}$.
This is our construction and we need to show (\ref{eq:nolim}), which
will follow if we show that, for $k$ sufficiently large,
\begin{equation}
\left.\begin{aligned}p(x,x;t_{2k}) & \geq3p(y,y;t_{2k})\\
p(x,x;t_{2k+1}) & \leq{\textstyle \frac{1}{3}}p(y,y;t_{2k+1})
\end{aligned}
\right\} \quad t_{k}:=\gamma_{k}^{4}.\label{eq:nk}
\end{equation}
We will only prove the even case, the odd will follow similarly.

Examine therefore $p(x,x;t_{2k})$. Since $a_{2k}>t_{2k}$ we get
that
\[
H_{\infty}^{\textrm{e/o}}\cap[-t_{2k},t_{2k}]^{22}=H_{2k}^{\textrm{e/o}}\cap[-t_{2k},t_{2k}]^{22}
\]
or in other words, the steps after $2k$ do not effect us at all.
Similarly it is possible to simplify the last stage namely
\begin{align*}
H_{2k}^{\textrm{e}}\cap[-t_{2k},t_{2k}]^{22} & =H_{2k-1}^{\textrm{e}}\cap(Q_{a_{2k},b_{2k-1}}+v_{2k})\cap[-t_{2k},t_{2k}]^{22}=\\
 & =H_{2k-1}^{\textrm{e}}\cap\left\{ \vec{n}\in\mathbb{Z}^{22}:\left|n_{i}\right|\leq b_{2k-1}\:\forall i=4,\dotsc,22\right\} =F_{2k-1}^{\textrm{e}}
\end{align*}
(here is where these translations by $v_{j}$ are used). By (\ref{eq:beta}),
\begin{align}
p_{G}(x,x;t_{2k}) & \geq\frac{1}{2}\mathbb{P}_{H_{2k}^{e}}(R\mbox{ returns to }x\mbox{ for the first time at }t)\ge\nonumber \\
 & \stackrel{\textrm{(\ref{eq:beta})}}{\ge}\frac{1}{2\gamma_{2k}}t_{2k}^{-3/2}=\frac{1}{2}t_{2k}^{-7/4}\label{eq:pxxle}
\end{align}
(the $\frac{1}{2}$ comes from the first step).

To estimate $p(y,y;t_{2k})$ we divide the event $\{R(t_{2k})=y\}$
according to whether $R$ ``essentially goes through $x$'' or not.
Formally, denote by $T_{1}$ and $T_{2}$ the first and last time
before $t_{2k}$ when $R(T)=x$ (if this does not happen, denote $T_{1}=\infty$
and $T_{2}=-\infty$). Then we define 
\begin{gather*}
p_{1}:=\mathbb{P}(T_{1}>\gamma_{2k},\, R(t_{2k})=y)\qquad p_{2}:=\mathbb{P}(T_{2}<t_{2k}-\gamma_{2k},\, R(t_{2k})=y)\\
p_{3}:=\mathbb{P}(T_{1}\leq\gamma_{2k},\, T_{2}\geq t_{2k}-\gamma_{2k},\, R(t_{2k})=y)
\end{gather*}
so that $p(y,y;t_{2k})\le p_{1}+p_{2}+p_{3}$.

Now, $p_{1}$ and $p_{2}$ are easy to estimate. As above we have
\[
H_{2k}^{\textrm{o}}\cap[-t_{2k},t_{2k}]^{22}=H_{2k-1}^{\textrm{o}}\cap[-t_{2k},t_{2k}]^{22}
\]
so (\ref{eq:alpha}) applies and we get
\begin{equation}
\mathbb{P}_{H_{\infty}^{\textrm{o}}}^{y}(R(t)=y)\leq\gamma_{2k}t^{-11}\quad\forall t\leq t_{2k}.\label{eq:yy}
\end{equation}
Therefore 
\begin{align}
p_{1} & \leq\sum_{t=\gamma_{2k}}^{t_{2k}-1}\mathbb{P}(T_{1}=t,\, R(t_{2k})=y)+\mathbb{P}(T_{1}=\infty,\, R(t_{2k})=y)\leq\nonumber \\
 & \leq\sum_{t=\gamma_{2k}-1}^{t_{2k}-2}\mathbb{P}_{H_{\infty}^{\textrm{o}}}(R(t)=y)+\mathbb{P}_{H_{\infty}^{\textrm{o}}}(R(t_{2k})=y)\leq\nonumber \\
 & \stackrel{\textrm{(\ref{eq:yy})}}{\leq}\sum_{t=\gamma_{2k}-1}^{t_{2k}-2}\gamma_{2k}\cdot t^{-11}+\gamma_{2k}\cdot t_{2k}^{-11}\leq C\gamma_{2k}^{-9}=Ct_{2k}^{-9/4}\stackrel{\textrm{(\ref{eq:pxxle})}}{=}o(p(x,x;t))\label{eq:p1}
\end{align}
and similarly for $p_{2}$. As for $p_{3}$, we have
\[
\mathbb{P}(T_{1}\leq\gamma_{2k})\leq\Big(\sum_{i=0}^{\infty}\mathbb{P}_{H_{\infty}^{\textrm{o}}}(r\textrm{ visits }y\textrm{ }i\textrm{ times before }\gamma_{2k})\Big)\cdot\delta\leq\frac{\delta}{\epsilon^{\textrm{o}}}\leq\frac{1}{2}
\]
and similarly (using time reversal) for $\mathbb{P}(T_{2}\geq t_{2k}-\gamma_{2k})$.
Hence we get
\[
p_{3}\leq\frac{1}{4}\max_{t_{2k}-2\gamma_{2k}\leq s\leq t_{2k}}p(x,x;s)
\]
and by lemma \ref{lem:lazy},
\begin{align*}
p_{3} & \leq\frac{1}{4}p(x,x;t_{2k})\left(1+O\left(\frac{\gamma_{2k}\log^{3}t_{2k}}{\sqrt{t_{2k}}}\right)\right)+O(e^{-c\log^{2}t_{2k}})\le\\
 & \stackrel{\textrm{(\ref{eq:pxxle})}}{\leq}\frac{1}{4}p(x,x;t_{2k})(1+o(1)).
\end{align*}
With the estimate (\ref{eq:p1}) for $p_{1}$ and the corresponding
estimate for $p_{2}$ we get 
\[
p(y,y;t_{2k})\leq p(x,x;t_{2k})\left(\frac{1}{4}+o(1)\right).
\]
A completely symmetric argument shows that at $t_{2k+1}$ the opposite
occurs:
\[
p(x,x;t_{2k+1})\leq p(y,y;t_{2k+1})\left(\frac{1}{4}+o(1)\right)
\]
proving the theorem.\end{proof}
\begin{rem*}
If you want an example with unweighted graphs, this is not a problem.
$H^{e}$ and $H^{o}$ are already unweighted, so the only thing needed
is to connect them, instead of with an edge of weight $\delta$, with
a segment sufficiently long such that the probability to traverse
it is $\le\delta$. The proof remains essentially the same.
\end{rem*}

\section{Manifolds}

We would like to show a Manifold $M$ and two points $x,y\in M$ such
that the heat kernel on $M$ satisfies
\[
\frac{p(x,x;t)}{p(y,y;t)}\nrightarrow
\]
as $t\to\infty$. Here is how one might translate the construction
of our theorem to the settings of manifolds. The dimension of the
manifold plays little role, so we might as well construct a surface.

For a subset $H\subset\mathbb{Z}^{22}$ one can associate a manifold
$H^{*}$ by replacing each vertex $v\in H$ with a sphere $v^{*}$
and every edge with a empty, baseless cylinder. Since the degree of
every vertex in $H$ is $\leq44$ we may simply designate 44 disjoint
circles on $\mathbb{S}^{2}$ and attach the cylinders to the spheres
at these circles.  This is reminiscent of the well-known ``infinite
jungle gym'' construction, see some lovely pictures in \cite{ON}.
The exact method of doing so is unimportant since anyway the manifold
that we get is roughly isometric to $H$, considered as an induced
subgraph of $\mathbb{Z}^{22}$ (one of the nice features of rough
isometry is that continuous and discrete objects may be roughly isometric,
rough isometry inspects only the large scale geometry). Clearly $H^{*}$
can be made $C^{\infty}$.

One can then construct a (possibly different) sequence $a_{k}$ and
two manifolds $\big(H_{\infty}^{\textrm{e/o}}\big)^{*}$ with the
only difference is that the $\alpha$ and $\beta$ must satisfy (\ref{eq:alpha})
and (\ref{eq:beta}) for our choice of the $^{*}$ operation. This
should be possible since $\big(H_{k}^{\textrm{e/o}}\big)^{*}$ and
$\big(F_{k}^{\textrm{e/o}}\big)^{*}$ are roughly isometric to $\mathbb{Z}^{22}$
and $\mathbb{Z}^{3}$ respectively. Instead of Delmotte one can use
the manifold version \cite{SC95} (or rather, \cite{D99} is the graph
version of earlier results for manifolds, see \cite{SC95} for historical
remarks).

The argument for the transience of $\big(H_{\infty}^{\textrm{e/o}}\big)^{*}$
should also be direct translation. Each contains a submanifold (with
boundary) which is roughly isometric to $\mathbb{Z}^{3}$ and therefore
is transient. Since transience is equivalent to the fact that for
some $c>0$ every function which is $1$ at $x$ and $0$ at infinity
satisfies that the Dirichlet form $\langle\nabla f,\nabla f\rangle>c$,
and since restricting to a submanifold only decreases the Dirichlet
form, we see that $\big(H_{\infty}^{\textrm{e/o}}\big)^{*}$ are transient.
Denote by 
\[
\epsilon^{\textrm{e/o}}=\inf_{x\in v^{*},v\sim\vec{0}^{\textrm{e/o}}}\mathbb{P}^{x}\left(W[0,\infty)\cap\big(\vec{0}^{\textrm{e/o}}\big)^{*}=\emptyset\right)
\]
where $W$ here is the Brownian motion on the manifold $\big(H_{\infty}^{\textrm{e/o}}\big)^{*}$;
and where the infimum is taken over all $x$ belonging to a sphere
$v^{*}$ where $v$ is some neighbor of $\vec{0}^{\textrm{e/o}}$
in $H_{\infty}^{\textrm{e/o}}$. One can now define $\delta=\frac{1}{2}\min(\varepsilon^{\textrm{e}},\varepsilon^{\textrm{o}})$
and connect $\vec{0}^{\textrm{e}}$ to $\vec{0}^{\textrm{o}}$ by
a cylinder sufficiently thin (or sufficiently long) such that the
probability to traverse it in either direction before reaching a neighboring
sphere is $\leq\delta$. This concludes a possible construction of
a manifold $M$, and one may take $x$ to be an arbitrary point in
$\big(\vec{0}^{\textrm{e}}\big)^{*}$ and $y$ and arbitrary point
in $\big(\vec{0}^{\textrm{o}}\big)^{*}$.

The proof that $p(x,x;t)/p(y,y;t)$ does not converge should not require
significant changes. We note that in our case it is possible for a
Brownian motion at time $t$ to exit the box $[-t,t]^{22}$, but it
is exponentially difficult to do so. Hence, for example, instead of
(\ref{eq:pxxle}) we get
\[
p(x,x;t_{2K})\geq\frac{1}{2\gamma_{2k}^{2}}t_{2k}^{-3/2}-Ce^{-ct_{2k}}\geq\frac{1}{4}t_{2k}^{-7/4}
\]
for $k$ sufficiently large. Another point to note is that lemma \ref{lem:lazy}
needs to be replaced with an appropriate analog.

\end{document}